\documentclass[11pt]{article}

\usepackage[letterpaper,margin=1.05in]{geometry}
\usepackage{amsmath,amssymb,amsthm,mathtools}
\usepackage{microtype}
\usepackage{enumitem}
\usepackage[colorlinks=true,linkcolor=blue,citecolor=blue,urlcolor=blue]{hyperref}

\newtheorem{theorem}{Theorem}[section]
\newtheorem{lemma}[theorem]{Lemma}
\newtheorem{proposition}[theorem]{Proposition}
\newtheorem{corollary}[theorem]{Corollary}
\theoremstyle{definition}
\newtheorem{definition}[theorem]{Definition}
\theoremstyle{remark}

\newcommand{\N}{\mathbb{N}}

\newcommand{\R}{\mathbb{R}}
\newcommand{\C}{\mathbb{C}}
\newcommand{\Log}{\operatorname{Log}}
\newcommand{\Arg}{\operatorname{Arg}}
\newcommand{\len}{\operatorname{len}}
\newcommand{\E}{\mathcal{E}}

\title{Tag-system computation using only powers and principal logarithms}
\author{Trevor Cappallo\thanks{\texttt{tc@by.tc}. An earlier version of the
construction was posted on Mathematics Stack Exchange in December 2023:
\url{https://math.stackexchange.com/questions/4827573/}.}}
\date{September 2026}

\begin{document}
\maketitle

\begin{abstract}
I show that one exact step of any tag system can be written as a single fixed
expression using only powers and logarithms.  A tag state is represented as an
integer; evaluating the expression yields the integer encoding of the next
configuration, and feeding the answer back into the same expression follows
the computation step by step.  The
discrete part of the update comes from leveraging the wraparound in the principal complex
logarithm, which effectively serves the same purpose as floor, mod, or sin in comparable approaches.
The iterating expressions use
only $x^y$ and $\log_b a$: it turns out that these two binary operations are sufficient, with no need for constants.
It follows that reachability
below a fixed threshold is undecidable for one fixed expression.  As a
separate application, the same mechanism represents addition, $2^x$, and
remainder with the convention $x\bmod0=x$, and hence every Kalm\'ar elementary
function.
\end{abstract}

\section{Introduction}

A tag system is a particularly simple kind of program.  It reads the first
symbol of a word, deletes a fixed number of symbols, and appends one of a
finite list of productions.  We store that leading symbol as the least
significant digit of an integer, which can then be manipulated with basic arithmetic:
shift off some digits, inspect the first digit, and attach the
chosen production at the other end.

The novelty here is that the same update can be written using only the operations
$x^y$ and $\log_ba$.  There is no floor, remainder, or conditional in the
final expression.  Instead, I use the fact that the principal complex
logarithm discards whole turns around the origin.  The number of discarded
turns is an integer---the unwinding number of Corless and Jeffrey
\cite{corless}---and for our purposes, it can be made to act as the floor operation.  Once floor
is available, the rest is bookkeeping.  The fully expanded expression is
more or less inscrutable, so I first write the update with ordinary arithmetic and then give
a mechanical translation back to powers and logarithms.

 The primary claim is about
\emph{iteration}: one evaluation performs one machine step, so repeated
evaluation follows the machine for as long as it runs.  In this sense the construction
is computationally universal, able to perform any computation a Turing machine could,
provided unbounded iteration and storage of the intermediate states.

Every binary tag system has an
exact step formula on every nonhalting binary word. The same construction
works for a finite alphabet; again, all displayed arithmetic and constants can be
eliminated, and reachability below a fixed threshold is undecidable for one
fixed term.  Additionally, Prunescu, Sauras-Altuzarra, and Shunia prove that the
Kalm\'ar elementary functions are generated by
$\{x+y,x\bmod y,2^x\}$, with $x\bmod0=x$ \cite{psa}.  Representing those
three operations in the form described in this paper gives the composition corollary they
suggested in correspondence.

The corollary in Appendix~\ref{sec:kalmar} belongs to the arithmetic-term and substitution-basis
program of Mazzanti and Marchenkov \cite{mazzanti,marchenkov}, most recently
sharpened by Prunescu, Sauras-Altuzarra, and Shunia.  Campagnolo, Moore, and
Costa give a complementary analog characterization of the Kalm\'ar elementary
functions: their restricted GPAC senses inequalities differentiably, whereas
the branch-cut mechanism here does so discontinuously \cite{campagnolo}.
Odrzywo{\l}ek has also
shown that the single binary operator
$\operatorname{eml}(x,y)=e^x-\ln y$, together with the constant $1$, generates
the standard repertoire of elementary functions \cite{odrzywolek}.  That
primitive has subtraction built in and concerns finite function generation;
the present language has no additive primitive, and the main claim concerns
iteration.

Analytic one-dimensional maps are already known to be universal under
iteration, and robust analytic examples are known as well \cite{koiran,graca}.
I am claiming a narrower point: namely, that one can execute an explicit tag update in the small syntax $\{x^y,\log_ba\}$,
with the branch convention, endpoints, and domains made explicit.  The proof
follows the informal derivation: first the integer-selection trick, then the
tag update, then iteration, and finally removal of the arithmetic shorthand.
The Kalm\'ar composition result is logically separate and is given in
Appendix~\ref{sec:kalmar}.  An alternative cyclic-tag formulation appears in
Appendix~\ref{sec:cyclic}.  Appendix~\ref{sec:collatz} gives a direct,
constant-free expression for a Collatz step.

\section{Principal-value log--exp terms}

The construction depends on one convention that cannot be left implicit.
Whenever a logarithm or power passes through a complex value, I use its
standard principal value.  In particular, the negative real axis is included
in the domain of $\Log$, even though $\Log$ jumps there.

Throughout, $\N=\{0,1,2,\ldots\}$.  For $z\ne0$ set
\[
    \Log z=\log|z|+i\Arg z, \qquad \Arg z\in(-\pi,\pi].
\]
Thus $\Log(-1)=i\pi$.  This is the principal value on $\C^\times$,
including the value on the negative real axis, where it is discontinuous.
For $x\ne0$ and for $a,b\ne0$ with $b\ne1$, define
\[
    x^y:=e^{y\Log x},
    \qquad
    \log_b a:=\frac{\Log a}{\Log b}.
\]

A \emph{log--exp term} is a \textbf{finite} term built from the two binary operations $x^y$ and
$\log_b a$, along with one or more variables and/or constants.
Such a term denotes a partial function: an input is outside its
domain if any subterm asks for one of the excluded logarithms or bases.

For readability, the next few sections display ordinary field arithmetic
and fixed integer constants.  Lemma~\ref{lem:compile} is a structural
translation of every such display into the two-operation language.

\section{The branch-cut trick}\label{sec:gadget}

The key observation is easiest to picture on the unit circle.  The quantity
$e^{2\pi i u}$ remembers only the fractional part of $u$.  Taking the
principal logarithm chooses one representative angle, and recovering the
discarded whole turns gives an integer.  The shift and orientation below
place the jump where it needs to be, including the value at the jump itself.

For $u\in\R$ define
\begin{equation}\label{eq:J}
    J(u):=u-\frac12+
    \frac{\Log\!\left(e^{2\pi i(1/2-u)}\right)}{2\pi i}.
\end{equation}

\begin{lemma}[Exact floor]\label{lem:floor}
For every real $u$, $J(u)=\lfloor u\rfloor$.
\end{lemma}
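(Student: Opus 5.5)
The plan is to compute $\Log(e^{i\theta})$ explicitly for real $\theta$ and then substitute. For $\theta\in\R$ we have $|e^{i\theta}|=1$, so $\Log(e^{i\theta})=i\Arg(e^{i\theta})$. Here $\Arg(e^{i\theta})$ is the unique representative of $\theta$ modulo $2\pi$ lying in $(-\pi,\pi]$, namely
\[
    \Arg(e^{i\theta})=\theta-2\pi k,\qquad k=\left\lceil \frac{\theta-\pi}{2\pi}\right\rceil ,
\]
since $k$ is the unique integer with $-\pi<\theta-2\pi k\le\pi$. Thus the first step is a direct verification from the definition of the principal branch given in the previous section, including the closed endpoint $\pi$ of the interval.

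Next take $\theta=2\pi(1/2-u)$. Then
\[
    \frac{\Log(e^{2\pi i(1/2-u)})}{2\pi i}=\tfrac12-u-k,\qquad k=\lceil -u\rceil=-\lfloor u\rfloor .
\]
Substituting into \eqref{eq:J} gives
\[
    J(u)=u-\tfrac12+\tfrac12-u+\lfloor u\rfloor=\lfloor u\rfloor,
\]
as claimed.

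The only delicate point is the boundary case, where $u$ is an integer and $\theta$ is an odd multiple of $\pi$. I would check this case directly. If $u=n\in\mathbb{Z}$, then $e^{2\pi i(1/2-n)}=-1$, so $\Log(-1)=i\pi$ by the convention $\Arg\in(-\pi,\pi]$. The correction term is then $1/2$, and $J(n)=n-\tfrac12+\tfrac12=n=\lfloor n\rfloor$, which agrees with the general formula.

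This is exactly why the shift by $1/2$ and the orientation $1/2-u$ (rather than $u-1/2$) were chosen. With the opposite orientation, the jump would fall on the wrong side and produce the ceiling at integers. I would confirm this by noting that $\lceil -u\rceil=-\lfloor u\rfloor$ holds for all real $u$, integers included, so no separate case split is actually needed beyond that identity.
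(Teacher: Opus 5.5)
Your proof is correct and is essentially the paper's argument: both identify $\Arg\bigl(e^{2\pi i(1/2-u)}\bigr)$ as the representative of the angle in $(-\pi,\pi]$, note that the discarded multiple of $2\pi$ is determined by $\lfloor u\rfloor$, and use the closed endpoint $\pi$ to handle integer $u$. The paper does this bookkeeping by writing $u=n+f$ with $f\in[0,1)$ and observing that $2\pi(1/2-f)$ already lies in $(-\pi,\pi]$, whereas you obtain the shift from the formula $k=\lceil -u\rceil=-\lfloor u\rfloor$; this is the same computation, and, as you note yourself, it makes your separate check of integer $u$ redundant.
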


\begin{proof}
Write $u=n+f$, with $n=\lfloor u\rfloor$ and $f\in[0,1)$.  Since $n$ is an
integer,
\[
 e^{2\pi i(1/2-u)}=e^{2\pi i(1/2-f)}.
\]
The angle $2\pi(1/2-f)$ already lies in $(-\pi,\pi]$, including $\pi$ when
$f=0$.  Thus
\[
 \Log\!\left(e^{2\pi i(1/2-u)}\right)=2\pi i(1/2-f).
\]
Substitution into \eqref{eq:J} gives
$J(u)=n+f-1/2+(1/2-f)=n$.
\end{proof}

For example, $J(3.2)=3$, $J(3)=3$, and $J(-3.2)=-4$.  At an integer input,
the inner exponential is $-1$, whose principal logarithm is $i\pi$; this is
what gives the right value at the endpoint.

All identities in this paper use exact principal-value semantics.  No claim
of numerical stability is intended: at integral $u$, $J(u)$ deliberately
evaluates on the branch cut, and floating-point rounding near that endpoint
can change the result.

As for being constant-free, note that the constants $e,\pi,i$ in \eqref{eq:J} are only expository.
With the same principal-value convention,
\begin{equation}\label{eq:J-pure}
  J(u)=u-\frac12+\frac12
  \log_{-1}\!\left((-1)^{1-2u}\right).
\end{equation}
Indeed, $(-1)^{1-2u}=e^{2\pi i(1/2-u)}$ and $\Log(-1)=i\pi$, so \eqref{eq:J-pure} is exactly \eqref{eq:J}.

\section{Exact tag-system steps}\label{sec:tag}

A tag system $T=(v,\Sigma,P)$ consists of an integer deletion number
$v\ge2$, a finite alphabet $\Sigma$, and a production map
$P:\Sigma\to\Sigma^*$.  From a word $w=w_1\cdots w_n$ with $n\ge v$, one
step deletes $w_1\cdots w_v$ and appends $P(w_1)$.  Under the most commonly used convention, the system halts when
$n<v$.

It has long been established that tag systems with deletion number $2$ can be universal
\cite{cocke}. While the halting and reachability problems for binary $2$-tag
systems---TS(2, 2) in the literature---are, in fact, decidable \cite{demol}, binary tag systems with
larger deletion numbers can be universal \cite{neary}. 

Choose an alphabet of size $m\ge2$ with
$\Sigma=\{0,1,\ldots,m-1\}$.  The front of a word will occupy the least
significant digit.

\begin{definition}[Positional encoding]\label{def:encoding}
For $w=w_1\cdots w_n\in\Sigma^*$, define
\begin{equation}\label{eq:encoding}
  c_m(w):=m^n+\sum_{r=1}^n w_r m^{r-1}.
\end{equation}
The leading base-$m$ digit $1$ is a marker; in particular,
$c_m(\varepsilon)=1$.
\end{definition}

The ordering is backwards relative to the way the word is typically written,
but this makes deletion
an integer right shift.  For example, $00101$ is stored with data bits $10100$
and a marker $1$, giving $110100_2=52$.  The marker is essential: it preserves
zeroes at the far end, so $000$ does not collapse to the empty word.

The inequalities
\begin{equation}\label{eq:length-bounds}
   m^n\le c_m(w)<2m^n\le m^{n+1}
\end{equation}
show that $\lfloor\log_m c_m(w)\rfloor=n$.  Also,
$c_m(w)\bmod m=w_1$ when $w$ is nonempty.

The binary case gives the most transparent formula.  Write $c=c_2$, and for
a binary tag system put
\[
       \alpha:=c(P(0)),\qquad \beta:=c(P(1)).
\]

Allowing floor and remainder as operations, the whole update is neatly carried out by
\begin{equation}\label{eq:plain-tag-step}
 H_T(s)=\left\lfloor\frac{s}{2^v}\right\rfloor
 +2^{\lfloor\log_2s\rfloor-v}
 \left((s\bmod2)(\beta-\alpha)+\alpha-1\right).
\end{equation}
This formula can be understood piecewise:
\begin{enumerate}[leftmargin=2.2em]
\item $\lfloor s/2^v\rfloor$ shifts off the first (least significant) $v$ bits;
\item $\lfloor\log_2s\rfloor$ finds the marker bit, and hence where the new
production must begin;
\item $(s\bmod2 )(\beta - \alpha) + \alpha$ reads the first symbol, selecting $\alpha$ when it is $0$ and
$\beta$ when it is $1$.  The final $-1$ cancels the surviving word's old marker;
the appended production code supplies the new marker.
\end{enumerate}

Example: let $v=2$, $P(0)=001$, and $P(1)=0$. The two production
codes are $\alpha=12$ and $\beta=2$ (obtained by reversing $P(0)$ and $P(1)$
and prepending a $1$ bit).
Likewise, the word $00101$ has code $s=52$.
Equation~\eqref{eq:plain-tag-step} reads
\[
 \underbrace{\left\lfloor\frac{52}{4}\right\rfloor}_{13}
 +\underbrace{2^{5-2}}_{8}
  \underbrace{(12-1)}_{11}=101.
\]
In word form, this is $00101\mapsto101001$.  Applying the same update once
more gives $101\mapsto41$, corresponding to
$101001\mapsto10010$.

The theorem below is equation~\eqref{eq:plain-tag-step} with each forbidden
integer operation replaced using $J$ from Lemma~\ref{lem:floor}.
For readability, name the three replacements
\begin{align}
 Q_v(s)&:=J\!\left(\frac{s}{2^v}\right),
 &N(s)&:=J\!\left(\log_2s\right),\label{eq:binary-parts}\\
 D(s)&:=s-2J\!\left(\frac{s}{2}\right).&&
\end{align}
On encoded states these are, respectively, the shifted quotient, the word
length, and the first bit.

\begin{theorem}[The base-$2$ formula]\label{thm:binary-step}
For a binary tag system $T=(v,\{0,1\},P)$, define
\begin{equation}\label{eq:tag-step}
 F_T(s):=Q_v(s)+2^{N(s)-v}
 \left(D(s)(\beta-\alpha)+\alpha-1\right).
\end{equation}
For every binary word $w$ with $\len(w)\ge v$,
\[
       F_T(c(w))=c(\operatorname{step}_T(w)).
\]
\end{theorem}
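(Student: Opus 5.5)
The plan is to reduce Theorem~\ref{thm:binary-step} to two facts: the plain formula \eqref{eq:plain-tag-step} is correct on encoded words, and the three gadgets in \eqref{eq:binary-parts} agree with the integer operations they replace. Fix $w=w_1\cdots w_n$ with $n\ge v$ and put $s=c(w)$. Since $s$ is a positive integer, every argument fed to $J$ is a real number: $s/2^v$, $\log_2 s$ (real because $s>0$, so the principal logarithm is the real one), and $s/2$. Lemma~\ref{lem:floor} then gives, with no domain issues, the identities
\[
Q_v(s)=\lfloor s/2^v\rfloor,\qquad N(s)=\lfloor\log_2 s\rfloor,\qquad D(s)=s-2\lfloor s/2\rfloor=s\bmod 2 .
\]
So $F_T(s)=H_T(s)$. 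By \eqref{eq:length-bounds}, $N(s)=n$. By the remark after \eqref{eq:length-bounds}, $D(s)=w_1$. In particular, $2^{N(s)-v}=2^{n-v}$ is a genuine nonnegative integer power of $2$.

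The main step is to verify $H_T(c(w))=c(\operatorname{step}_T(w))$ directly from Definition~\ref{def:encoding}. The shift gives
\[
\lfloor s/2^v\rfloor = 2^{n-v}+\sum_{r=v+1}^{n} w_r 2^{r-1-v}.
\]
Here the marker moves to position $n-v$, and the data digits $w_{v+1},\dots,w_n$ occupy positions $0,\dots,n-v-1$, because the discarded low part $\sum_{r\le v}w_r2^{r-1}<2^v$. This is exactly $c(w')$, where $w'=w_{v+1}\cdots w_n$ has length $n-v$.

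Next, the selector $D(s)(\beta-\alpha)+\alpha$ equals $c(P(w_1))$, since $w_1\in\{0,1\}$. Finally, I would use the concatenation identity
\[
c(xy)=c(x)+2^{\len(x)}\bigl(c(y)-1\bigr).
\]
It follows from \eqref{eq:encoding}: the digits of $y$ are shifted up by $\len(x)$, and the marker $2^{\len(x)+\len(y)}$ replaces the marker $2^{\len(x)}$ of $x$, which accounts for the $-1$. Applying it with $x=w'$ and $y=P(w_1)$ gives
\[
H_T(s)=c(w')+2^{n-v}\bigl(c(P(w_1))-1\bigr)=c(w'P(w_1))=c(\operatorname{step}_T(w)).
\]

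The only place I expect care is needed is the concatenation identity when $y$ is empty. In that case $c(y)=1$, the correction term vanishes, and the marker of $w'$ survives, so the identity still holds. The case $w'=\varepsilon$ (when $n=v$) is also fine, since the factor is then $2^0$. The rest is bookkeeping, since Lemma~\ref{lem:floor} holds for every real $u$, including the integer endpoints such as $s/2^v\in\N$ or $s$ a power of $2$.
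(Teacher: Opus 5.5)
Your proof is correct and follows the same route as the paper. Like the paper, you apply Lemma~\ref{lem:floor} to identify $Q_v(s)$, $N(s)$, $D(s)$ with the shifted quotient, the length $n$ and the first bit $w_1$, then observe that adding $2^{n-v}\bigl(c(P(w_1))-1\bigr)$ replaces the marker left at position $n-v$ by the production; your concatenation identity $c(xy)=c(x)+2^{\len(x)}\bigl(c(y)-1\bigr)$ and the edge-case checks just make rigorous the marker-replacement step that the paper states in words.
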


\begin{proof}
Let $s=c(w)$ and $n=\len(w)$.  Lemma~\ref{lem:floor} gives
\[
 Q_v(s)=\left\lfloor\frac{s}{2^v}\right\rfloor,
 \qquad N(s)=n,
\]
and
\[
 D(s)=s-2\left\lfloor\frac{s}{2}\right\rfloor=w_1.
\]
Thus $D(s)(\beta-\alpha)+\alpha=c(P(w_1))$.  The quotient
$\lfloor s/2^v\rfloor$ deletes the first $v$ bits and leaves the marker at
position $n-v$.  Adding
\[
       2^{n-v}\bigl(c(P(w_1))-1\bigr)
\]
replaces that marker by the production and its new marker.  This is exactly
the base-$2$ encoding of the updated word.
\end{proof}

The same bookkeeping works for larger alphabets.  The only extra issue is
selecting among more than two productions.  Since the rule table is finite,
it can be written as one polynomial: let $L_T$ be the rational Lagrange
interpolation polynomial determined by
\begin{equation}\label{eq:lagrange}
  L_T(a)=c_m(P(a)),\qquad 0\le a<m,
\end{equation}
and write
\begin{align}
 D_m(s)&:=s-mJ\!\left(\frac{s}{m}\right),
 \label{eq:first-digit}\\
 N_m(s)&:=J\!\left(\log_ms\right).
 \label{eq:length-term}
\end{align}

\begin{proposition}[General alphabets]\label{prop:general-step}
For $T=(v,\Sigma,P)$ define
\begin{equation}\label{eq:general-step}
 F_{T,m}(s):=
 J\!\left(\frac{s}{m^v}\right)
 +m^{N_m(s)-v}\bigl(L_T(D_m(s))-1\bigr).
\end{equation}
For every word $w$ with $\len(w)\ge v$,
\[
    F_{T,m}(c_m(w))=c_m(\operatorname{step}_T(w)).
\]
\end{proposition}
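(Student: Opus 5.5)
The plan is to mirror the proof of Theorem~\ref{thm:binary-step}, replacing base $2$ by base $m$ and the two-way selection by the interpolation polynomial $L_T$. Set $s=c_m(w)$ and $n=\len(w)\ge v$. By Lemma~\ref{lem:floor}, each occurrence of $J$ in \eqref{eq:general-step} evaluates to the corresponding floor. So it suffices to identify three quantities on encoded states: $\lfloor s/m^v\rfloor$, $\lfloor\log_m s\rfloor$, and $s-m\lfloor s/m\rfloor$.

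First, the length term. By the bounds \eqref{eq:length-bounds}, $m^n\le s<m^{n+1}$, so $N_m(s)=\lfloor\log_m s\rfloor=n$. Here $\log_m s$ is the principal-value quotient $\Log s/\Log m$. Since $s,m$ are positive reals with $m\ge2$, this is the ordinary real logarithm, so Lemma~\ref{lem:floor} applies to a real argument. Second, the first digit. Writing $s=w_1+m\bigl(\sum_{r\ge2}w_r m^{r-2}+m^{n-1}\bigr)$ with $0\le w_1<m$ gives $\lfloor s/m\rfloor$ equal to the bracketed integer (note $n\ge v\ge 2\ge 1$). Hence $D_m(s)=w_1\in\{0,\dots,m-1\}$, and by \eqref{eq:lagrange}, $L_T(D_m(s))=c_m(P(w_1))$ exactly. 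The polynomial is only ever evaluated at nodes, so its rational coefficients cause no trouble.

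Third, the shift. We have
\[
 \left\lfloor\frac{s}{m^v}\right\rfloor=m^{n-v}+\sum_{r=v+1}^{n}w_r m^{r-1-v}=c_m(w_{v+1}\cdots w_n),
\]
because the discarded part $\sum_{r\le v}w_r m^{r-1}$ is at most $m^v-1$. This is the code of the surviving word $u=w_{v+1}\cdots w_n$ of length $n-v$, with its marker at position $n-v$.

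Finally, we concatenate. For words $u$ of length $k$ and $p$ arbitrary, the claim is
\[
 c_m(up)=c_m(u)+m^{k}\bigl(c_m(p)-1\bigr).
\]
To check this, expand $c_m(up)=m^{k+|p|}+\sum_{r\le k}u_r m^{r-1}+\sum_j p_j m^{k+j-1}$ and compare with the right side, where the $-1$ removes $u$'s marker $m^k$ and $m^k c_m(p)$ supplies $p$'s digits and the new marker $m^{k+|p|}$. Applying this with $u=w_{v+1}\cdots w_n$, $k=n-v$, and $p=P(w_1)$ gives exactly $F_{T,m}(s)=c_m(uP(w_1))=c_m(\operatorname{step}_T(w))$.

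The only mildly delicate point is the domain and branch bookkeeping. We must confirm that every argument fed to $J$ is real, so that Lemma~\ref{lem:floor} applies. We must also confirm that $m^{N_m(s)-v}$, computed as a principal power of the positive base $m$, equals the ordinary integer power. Both hold because $s>0$ and $m\ge2$. The rest is the concatenation identity, which is routine.
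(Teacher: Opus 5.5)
Your proof is correct and follows the same route as the paper. You apply Lemma~\ref{lem:floor} to identify the quotient, length, and first-digit gadgets on $c_m(w)$, then note that adding $m^{n-v}\bigl(c_m(P(w_1))-1\bigr)$ to the shifted code replaces the surviving marker by the production; beyond that, you only spell out what the paper leaves to ``as in the binary proof,'' namely $\lfloor s/m^v\rfloor=c_m(w_{v+1}\cdots w_n)$, the concatenation identity $c_m(up)=c_m(u)+m^{k}(c_m(p)-1)$, and the check that every argument of $J$ is real.
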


\begin{proof}
Let $s=c_m(w)$ and $n=\len(w)$.  Lemma~\ref{lem:floor} gives
\[
 J\!\left(\frac{s}{m^v}\right)
 =\left\lfloor\frac{s}{m^v}\right\rfloor,
 \quad N_m(s)=n,
 \quad D_m(s)=s-m\left\lfloor\frac{s}{m}\right\rfloor=w_1.
\]
Hence $L_T(D_m(s))=c_m(P(w_1))$.  As in the binary proof, the first term
deletes $v$ digits and leaves the marker at position $n-v$, while
\[
        m^{n-v}\bigl(c_m(P(w_1))-1\bigr)
\]
replaces that marker by the production and its new marker.
\end{proof}

The halting threshold is analogous to the binary case:
\begin{equation}\label{eq:threshold}
     \len(w)<v \quad\Longleftrightarrow\quad c_m(w)<m^v.
\end{equation}
The forward implication follows from the upper bound in
\eqref{eq:length-bounds}, and the reverse implication from its lower bound.
The step term is used only while the state is at or above this threshold.

\section{From one step to unbounded computation}\label{sec:iteration}

Our machinery having been defined, we finally get to iterate. Evaluate $F_T$ once
for one tag step, then feed its output back into the same $F_T$. To be precise:

\begin{definition}\label{def:orbit-sim}
A partial map $F$ \emph{orbit-simulates} a tag system $T$ under an encoding
$c$ if
\[
       F(c(w))=c(\operatorname{step}_T(w))
\]
for every nonhalting configuration $w$.  A term language is
\emph{computationally universal under iteration} if it contains a fixed term
that orbit-simulates a fixed universal tag system under a computable input
encoding.
\end{definition}

This describes the simulated orbit only up to halting.  A fixed finite set of
subthreshold states cannot also serve as an unbounded output tape.

\begin{corollary}[Universality under iteration]\label{cor:orbit-universal}
Principal-value log--exp terms are computationally universal under iteration.
The terms may be chosen without constants.
\end{corollary}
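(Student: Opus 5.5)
We fix a universal tag system and take its step term from the previous section. Then we check that iterating that term follows the computation on the encoded states. Finally we remove every piece of displayed arithmetic and every constant using the compilation Lemma~\ref{lem:compile}. This last step does the real work for the phrase ``without constants''.

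\emph{Choice of machine.} By \cite{cocke}, universal $2$-tag systems exist over a finite alphabet; alternatively \cite{neary} gives universal binary tag systems with larger deletion number. Fix one such $T=(v,\Sigma,P)$ with $|\Sigma|=m$. The input encoding is $c_m$ from Definition~\ref{def:encoding}, which is clearly computable. By Proposition~\ref{prop:general-step}, or Theorem~\ref{thm:binary-step} in the binary case, the term $F_{T,m}$ satisfies $F_{T,m}(c_m(w))=c_m(\operatorname{step}_T(w))$ for every $w$ with $\len(w)\ge v$. These are exactly the nonhalting configurations, so Definition~\ref{def:orbit-sim} holds. Induction on the number of steps then shows that the $k$-fold iterate sends $c_m(w)$ to $c_m(\operatorname{step}_T^k(w))$ for as long as the run has not halted. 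By \eqref{eq:threshold}, halting is recognized by the orbit first dropping below $m^v$. So the orbit of $F_{T,m}$ on $c_m(w)$ faithfully encodes the universal computation.

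\emph{Domain check.} Next I verify that every subterm of $F_{T,m}$ is defined on encoded nonhalting states, so that the partial-function semantics never intervene.
\begin{itemize}
\item The argument $s\ge m^v>1$, so $\log_m s$ is defined because $m\ne 0,1$.
\item The inner power in \eqref{eq:J-pure} has base $-1\neq 0$.
\item The logarithm base $-1$ is neither $0$ nor $1$.
\item The value $(-1)^{1-2u}$ lies on the unit circle and is nonzero.
\end{itemize}
Each of these is immediate, but they must be recorded, since Lemma~\ref{lem:compile} preserves the domain only where the displayed arithmetic is defined.

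\emph{Eliminating arithmetic and constants.} This is the main obstacle. We apply Lemma~\ref{lem:compile} to the display \eqref{eq:general-step}, with $J$ written in the form \eqref{eq:J-pure}, to obtain a pure $\{x^y,\log_b a\}$ term. The ingredients to be eliminated are $+$, $-$, multiplication, division, rational constants such as $\tfrac12$ and the Lagrange coefficients of $L_T$, the integer $m$, and the constant $-1$.
\begin{itemize}
\item \textbf{Products and quotients.} I would express $a\cdot b$ and $a/b$ through $\log$ and powers. For example, $a^{\log_c b}$-type identities work off the branch cut, with care when the factors are negative.
\item \textbf{Addition.} Addition would come from the floor-free identity obtained by taking logarithms of a product of exponentials.
\item \textbf{Constants from the variable.} Constants would be manufactured from the input $s$ itself: $\log_s s=1$, then $-1$ via a principal value such as $1^{\,\cdot}$ combined with $\log$ of a suitable power, then the integers and rationals from $1$ and $-1$ by the compiled field operations.
\end{itemize}
The delicate part is ensuring these identities hold exactly under principal values at the actual arguments, including the negative intermediate values inside $J$ and $L_T$. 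I would lean on Lemma~\ref{lem:compile}, assumed to supply exactly this translation including constant elimination. The composed term is then a fixed constant-free log--exp term that orbit-simulates $T$, which proves both claims of the corollary.
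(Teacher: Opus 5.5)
Your route matches the paper's. You fix a universal tag system, take its step term from Proposition~\ref{prop:general-step}, and compile it to a constant-free term with Lemma~\ref{lem:compile}, using the input $s$ as transport base. The gap is in the choice of machine.

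\textbf{The halting convention.} The Cocke--Minsky construction may signal the simulated machine's halt by a special halting letter rather than by the word becoming shorter than $v$. In this paper a tag system has a total production map $P:\Sigma\to\Sigma^*$ and halts only when $\len(w)<v$. Suppose you import such a simulator and give the halting letter some production. Then nothing guarantees that the word ever becomes short after the simulated machine stops. Two of your assertions are therefore unjustified for the system you cited: that ``these are exactly the nonhalting configurations'', and that ``halting is recognized by the orbit first dropping below $m^v$''. That system's short-word halting problem need not be undecidable. So the step-exact orbit you obtain is not shown to encode the universal computation's halting in the way you claim.

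\textbf{How the paper closes it.} The paper uses Robinson's observation: the productions used after the halt signal can be made length-reducing, so that the word reaches length $1$ exactly when the simulated machine halts. This yields a fixed $T_*$ whose natural short-word halting set is undecidable. Only then are Proposition~\ref{prop:general-step} and \eqref{eq:threshold} applied. You need that step, or else a direct check that whichever system you cite (e.g.\ Neary's binary ones) is universal with respect to halting at $\len(w)<v$. With it, the rest of your argument goes through.

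\textbf{A minor point.} Your sketch of obtaining $-1$ ``via $1^{\,\cdot}$'' cannot work, since $1^y=e^{y\Log 1}=1$. The lemma's $\log_s(\log_{s^s}s)=-1$ is what actually does it. Because you defer to Lemma~\ref{lem:compile}, which also uses Horner form so that $L_T$ is defined at digit $0$, the compilation part of your proposal is fine.
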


\begin{proof}
Choose a known universal tag-system simulator by applying the Cocke--Minsky
construction to a fixed universal two-symbol Turing machine \cite{cocke}.
One detail matters here: a simulator may signal a halt with a special letter
without immediately becoming a short word.  Robinson observes that the
productions used after that signal can instead be made length-reducing, so
that the word reaches length $1$ exactly when the simulated machine halts
\cite[p.~195]{robinson}.  This gives a fixed tag system $T_*$ whose
natural short-word halting set is undecidable.
Proposition~\ref{prop:general-step} supplies its orbit-simulating source
expression, and Lemma~\ref{lem:compile} compiles that expression into a
constant-free log--exp term on the entire pre-halting state domain.
\end{proof}

In plain terms, no algorithm can decide whether repeated application of one
fixed term will eventually fall below one fixed threshold.

\begin{corollary}[A fixed undecidable reachability problem]
There are a constant-free log--exp term $F$ and an integer $B\ge4$ such that
\begin{equation}\label{eq:reachability-set}
 \left\{s\in\mathbb N:s\ge B,\ \begin{array}{l}
  \text{for some $t\ge1$, $F^j(s)\ge B$ for $0\le j<t$,}\\[-2pt]
  \text{and $F^t(s)<B$}
 \end{array}\right\}
\end{equation}
is undecidable.
\end{corollary}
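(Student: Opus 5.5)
The plan is to reduce the halting problem for the fixed tag system $T_*$ of Corollary~\ref{cor:orbit-universal} to membership in the set \eqref{eq:reachability-set}. Let $m$ and $v$ be the alphabet size and deletion number of $T_*$, and take $F$ to be the constant-free compiled form of $F_{T_*,m}$ given by Lemma~\ref{lem:compile}. Take $B:=m^v$. Since $m\ge2$ and $v\ge2$, we have $B\ge4$. By \eqref{eq:threshold}, a configuration $w$ is nonhalting exactly when $c_m(w)\ge B$.

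The core step is an induction on $j$. Suppose $w$ is an initial word and $w_j$ is its $j$-th successor, with $w_0,\dots,w_{j-1}$ all nonhalting. Then $F^j(c_m(w))=c_m(w_j)$. The inductive step uses Proposition~\ref{prop:general-step}: since $c_m(w_{j-1})\ge B$, the term $F$ is applied only inside the pre-halting domain where Lemma~\ref{lem:compile} guarantees that $F$ agrees with $F_{T_*,m}$. This gives two cases.
\begin{itemize}
\item If $T_*$ halts from $w$ at step $t\ge1$, then $F^j(c_m(w))\ge B$ for $j<t$ and $F^t(c_m(w))<B$.
\item If $T_*$ never halts, every iterate is defined and stays $\ge B$.
\end{itemize}
In both cases the first subthreshold iterate of $F$ matches exactly the first short word of $T_*$. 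Also, any $t$ witnessing membership in \eqref{eq:reachability-set} must be this first halting time, so the set's condition is met precisely when $T_*$ halts from $w$.

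The reduction is then $w\mapsto c_m(w)$, restricted to words with $\len(w)\ge v$. This map is computable. The words of length $<v$ are trivially halting and form a finite, decidable case that can be handled separately. So if \eqref{eq:reachability-set} were decidable, the short-word halting set of $T_*$ restricted to long words would be decidable, and hence the whole halting set would be decidable. This contradicts the choice of $T_*$ in Corollary~\ref{cor:orbit-universal}.

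One subtlety concerns integers $s\ge B$ that are codes of no word, for example those with a leading base-$m$ digit other than $1$. On such inputs $F$ may behave arbitrarily or be undefined. This does not break the argument, because we only need a many-one reduction from an undecidable set, and undecidability of the image question follows even though the set also contains other, uncontrolled elements. If $F^j(s)$ is undefined, the condition on $s$ simply fails.

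The main obstacle is making sure Lemma~\ref{lem:compile} truly yields equality of the compiled term with $F_{T_*,m}$ on every encoded nonhalting state. That includes the rational Lagrange coefficients and the branch-cut evaluations of $J$ at integer arguments. I would rely on the lemma's stated scope, the entire pre-halting state domain, exactly as in the proof of Corollary~\ref{cor:orbit-universal}.
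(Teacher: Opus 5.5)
Your proposal is correct and follows the paper's proof: the same compiled $F$ and threshold $B=m^v$, the orbit of a code tracking the tag orbit up to the first crossing below $B$, and the finitely many initially short words set aside. The only difference is that the paper also observes that $F$ sends every integer $s\ge B$, code or not, to a positive integer. So the stopped orbits are well defined on all of $\N$, and your convention for undefined iterates is never needed.
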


\begin{proof}
Fix the tag system $T_*=(v,\Sigma,P)$ just described, and write
$m=|\Sigma|$.  Take the compiled term from
Proposition~\ref{prop:general-step} for $F$ and set $B=m^v$.  On every
integer $s\ge B$, the floor gadgets return an integer quotient, length,
and residue; the interpolation value is a production encoding.  Thus $F(s)$
is a positive integer, so the stopped orbits in
\eqref{eq:reachability-set} are well defined.  Now let
\[
       C:=\{c_m(w):w\in\Sigma^*,\ \len(w)\ge v\}.
\]
The decidable set $C$ consists of integers at least $B$ whose leading
base-$m$ digit is $1$.  Equations \eqref{eq:general-step} and
\eqref{eq:threshold} identify
natural halting on $C$ with the first crossing below $B$.  Excluding the
finitely many initially halted words does not change undecidability.  The
conditions in \eqref{eq:reachability-set} stop the orbit at that first
crossing; the compiled term is never evaluated afterward.  Consequently a
decision procedure for the full set \eqref{eq:reachability-set} would, by
restriction to $C$, decide the halting problem of $T_*$. 
\end{proof}

\section{Removing the arithmetic scaffolding}\label{sec:compile}

Equation~\eqref{eq:tag-step} still appears to cheat, as it clearly contains
constants and arithmetic operators.  With a little bit of exponent-juggling, we see
they turn out to be scaffolding, not extra primitives.  Fixing any positive base $N>1$ lets powers store
numbers in exponents and logarithms retrieve them.  The following identities
give a mechanical translation, so the enormous fully expanded term never
needs to be printed.

\begin{lemma}[Compilation]\label{lem:compile}
Fix a real $N>1$ and define
\[
  \mathbf{1}_N:=\log_N N,
  \qquad
  \boldsymbol{-1}_N:=\log_N\!\left(\log_{N^N}N\right).
\]
Then $\mathbf{1}_N=1$, $\boldsymbol{-1}_N=-1$, and, for real $x,y$ on the
indicated nonzero-denominator domains,
\begin{align}
  xy&=\log_N\!\left((N^x)^y\right),
  \label{eq:compile-mul}\\
  x+y&=\log_N\!\left(
       \log_N\!\left((N^{N^x})^{N^y}\right)\right),
  \label{eq:compile-add}\\
  x^{-1}&=x^{\boldsymbol{-1}_N},
  \qquad
  x/y=\log_N\!\left((N^x)^{y^{\boldsymbol{-1}_N}}\right).
  \label{eq:compile-div}
\end{align}
Together with \eqref{eq:J-pure}, these identities compile every source
formula in this paper into a term over $\{x^y,\log_ba\}$.

For the tag-system terms, no constants are required. Substitute the input
variable $s$ for $N$ in the term.  The resulting compiled term is defined and equal to the
source formula for every integer $s\ge m^v$, and thus in particular for
every valid state.
\end{lemma}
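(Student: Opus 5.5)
The proof has three parts: verify each identity directly from the principal-value definitions, show that constants and arithmetic compile away, and then control the domain once $N$ is replaced by $s$.

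\emph{The identities.} For real $N>1$ we have $\Log N=\ln N$, a positive real, so $\mathbf{1}_N=\ln N/\ln N=1$. Next, $N^N=e^{N\ln N}$ is real and greater than $1$, so
\[
\log_{N^N}N=\frac{\ln N}{N\ln N}=N^{-1}.
\]
Hence $\boldsymbol{-1}_N=\log_N(N^{-1})=-1$. For real $x$, $N^x=e^{x\ln N}$ is a positive real, so $\Log N^x=x\ln N$ with no branch ambiguity. Then
\[
(N^x)^y=e^{xy\ln N}\quad\text{and}\quad \log_N\bigl((N^x)^y\bigr)=xy,
\]
which is \eqref{eq:compile-mul}. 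Applying \eqref{eq:compile-mul} with the positive reals $N^x,N^y$ in place of $x,y$ gives
\[
\log_N\bigl((N^{N^x})^{N^y}\bigr)=N^xN^y=N^{x+y}.
\]
One more $\log_N$ returns $x+y$, which is \eqref{eq:compile-add}. For $x\ne0$ real, $x^{-1}=e^{-\Log x}=1/x$ holds even for negative $x$, since $e^{-\Log x}=1/e^{\Log x}$. Combining this with \eqref{eq:compile-mul} gives \eqref{eq:compile-div}. The key point is that every exponent fed to base $N$ is real, so no wraparound occurs. The only deliberate branch-cut evaluation is inside $J$, and \eqref{eq:J-pure} already expresses it in the two operations.

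\emph{Eliminating constants.} Every source formula is built from variables, rational constants, $+$, $-$, $\times$, $/$, powers, $\log_b$, and $J$. I will argue by structural induction over these. Integers are produced from $\mathbf{1}_N$ and $\boldsymbol{-1}_N$ by repeated compiled addition, and rationals by compiled division. Subtraction is $x+(\boldsymbol{-1}_N)y$. The one remaining constant in \eqref{eq:J-pure} is $-1$, which is $\boldsymbol{-1}_N$. Note that $\log_{-1}$ uses $-1$ as a base, which is allowed since $-1\ne0,1$. The result is a term containing only $N$ and the variables.

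\emph{Setting $N=s$ (the main obstacle).} Substituting $N:=s$ requires $s>1$ real, and $s\ge m^v\ge4$ holds. The substitution is harmless only if every subterm evaluated on a valid input receives real arguments with nonzero denominators and nonzero bases. I expect this domain bookkeeping to be the hard part.

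On $s\ge m^v$, every intermediate quantity in \eqref{eq:general-step} is real:
\begin{itemize}
\item $s/m^v$ and $\log_m s$ are real;
\item the $J$ values are integers by Lemma~\ref{lem:floor};
\item $D_m(s)\in\{0,\dots,m-1\}$, so the Lagrange values are real;
\item $m^{N_m(s)-v}$ is a positive real.
\end{itemize}
The compiled $+$, $\times$, and $/$ only ever feed real arguments into $N^{(\cdot)}$. Their outer $\log_N$ is applied to positive reals, and the inner $\log_N$ in \eqref{eq:compile-add} is applied to $N^{x+y}>0$.

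The denominator $y$ in \eqref{eq:compile-div} must be nonzero. In the source formulas the denominators are fixed powers $m^v$ and $m$, $\log m$ inside $\log_m$, and interpolation denominators $a-b$ with $a\ne b$. All of these are nonzero constants independent of $s$.

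Inside $J$, the argument $u$ is real, so $(-1)^{1-2u}$ lies on the unit circle and is nonzero. The computation $\Log\bigl((-1)^{1-2u}\bigr)$ is then exactly as in Lemma~\ref{lem:floor}.

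The remaining care is to check that zero products never become bases or logarithm arguments. In the compiled forms, zeros appear only as exponents of $N$, which is fine. I will verify this by inspecting the compiled form of each operation, so that the compiled term agrees with the source formula for all integers $s\ge m^v$.
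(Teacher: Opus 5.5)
Your proposal is correct and follows the paper's proof essentially step for step: positive-real powers of $N$ make the identities branch-free, $\pm1$ generate all rational constants, and with $N=s\ge m^v\ge4$ Lemma~\ref{lem:floor} keeps every compiled argument real, with the unit-circle power passed straight to $\log_{-1}$. The one point you leave to ``inspection'' is made explicit in the paper. The interpolation polynomial must be written without primitive powers of the digit; the paper uses Horner form. This matters because $D_m(s)$ can be $0$ and $0^k$ is undefined in this language. Once $L_T$ is built only from compiled $+$ and $\times$, your observation that zeros appear only as exponents of $N$ settles it.
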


\begin{proof}
\medskip\noindent\textit{Why the identities work.}
Because $N>1$, all powers of $N$ occurring in
\eqref{eq:compile-mul}--\eqref{eq:compile-div} are positive real numbers, so
principal logarithms introduce no branch correction.  The identities follow
by direct calculation.  For example,
\[
 \log_{N^N}N=\frac1N,
 \qquad
 \log_N(1/N)=-1,
\]
so even the current state itself can manufacture the constant $-1$; for the
first state of the running example, $N=s=52$.  Also,
\[
 (N^{N^x})^{N^y}=N^{N^xN^y}=N^{N^{x+y}},
\]
which proves \eqref{eq:compile-add}.  Subtraction is addition after
multiplication by $-1$.  Starting from $1$ and $-1$, binary doubling and addition make
every integer; inversion and compiled multiplication make every rational.
Replacing the arithmetic operations one at a time therefore converts every
displayed source expression into powers and logarithms.  Before doing so, each
fixed interpolation polynomial is written in Horner form, so its evaluation
uses only addition and multiplication and is defined even when the selected
digit is $0$.

\medskip\noindent\textit{Why no constants need be assumed.}
For the constant-free tag-system version, take any integer input
$s\ge m^v\ge4$.  Lemma~\ref{lem:floor} makes the three quotient, residue,
and length gadgets real integers, and the interpolation polynomial is real.
Taking the transport base to be the input subterm $N=s$
therefore meets $N>1$ throughout the orbit up to and including the step that
crosses the threshold.  The values $1,-1$ and all machine constants are then
constant-valued terms built relative to $s$; essentially, constants are functions
of $s$ we can construct which return the same constant throughout this domain.  All source subterms are real on every such
integer input except for the unit-circle power occurring inside
\eqref{eq:J-pure}.
That power is nonzero and is passed directly to the primitive logarithm with
base $-1$; its logarithmic quotient is real.  Every displayed division has a
nonzero denominator, so the replacement stays within the domains just proved.
\end{proof}

\appendix
\section{A separate composition consequence}\label{sec:kalmar}

This section is not used in the tag-system simulation.  It answers the other
question raised in the introduction: what ordinary functions can be obtained
by one finite substitution term, without iteration?

Let $\E$ denote the standard class of Kalm\'ar elementary functions on $\N$.
The only fact about the class needed here is the basis theorem of Prunescu,
Sauras-Altuzarra, and Shunia.  In their substitution convention, constants
and projections are available, and their Theorem~1 states
\begin{equation}\label{eq:psa-basis}
    \E=\langle x+y,\ x\bmod y,\ 2^x\rangle,
\end{equation}
where $x\bmod0=x$ \cite{psa}.  Thus it is enough to represent these three
operations on natural-number inputs.

\begin{proposition}\label{prop:basis-terms}
Each operation in \eqref{eq:psa-basis} is the restriction to natural-number
inputs of a log--exp term with principal values and constants.
\end{proposition}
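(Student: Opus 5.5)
The plan is to handle the three operations separately, using Lemma~\ref{lem:compile} for field arithmetic and Lemma~\ref{lem:floor} (in the constant-free form \eqref{eq:J-pure}) for the one discontinuous operation. Constants are allowed here, so I fix the transport base $N=2$ once and for all. Then every power of $N$ appearing in \eqref{eq:compile-mul}--\eqref{eq:compile-div} is a positive real, and those identities hold on all real inputs, with a nonzero denominator required only for division. The constants $1,-1,2,\tfrac12$ are available as the fixed terms $\mathbf{1}_2$ and $\boldsymbol{-1}_2$, and from these all rationals follow, exactly as in Lemma~\ref{lem:compile}.

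Addition is the identity \eqref{eq:compile-add} with $N=2$, which is valid for all real $x,y$. The exponential $2^x$ is the primitive power with the constant base $2$; since $2>0$ its principal value is the ordinary real power. These two cases are immediate.

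The main work is $x\bmod y$ with $x\bmod 0=x$. For $y\ge1$ the source formula is $x-y\,J(x/y)$ by Lemma~\ref{lem:floor}, but at $y=0$ the quotient $x/y$ is undefined. I would remove this obstacle with a branch-free indicator of $y\ge1$ on $\N$. The term
\[
 \sigma(y):=J\!\left(\frac{y}{y+1}\right)\quad\text{fails,}
\]
since it gives $0$ for all $y\in\N$. Instead I would take
\[
 Z(y):=J\!\left(\frac{1}{y+1}\right),
\]
which equals $1$ at $y=0$ and $0$ for $y\ge1$. The denominator $y+1$ is always nonzero. The plan is then to replace the divisor by $y':=y+Z(y)$, which is $1$ at $y=0$ and equals $y$ otherwise, so it is never zero. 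Set
\[
 M(x,y):=x-(1-Z(y))\,y'\,J\!\left(\frac{x}{y'}\right).
\]
For $y\ge1$ this is $x-yJ(x/y)=x\bmod y$. For $y=0$ the correction term is multiplied by $0$, giving $x$, as required. All the arithmetic here is compiled via Lemma~\ref{lem:compile}, and $J$ via \eqref{eq:J-pure}.

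Domains are the remaining point to check, and I expect this bookkeeping to be the only real obstacle. Every argument fed to $J$ is real, so \eqref{eq:J-pure} applies: the unit-circle power $(-1)^{1-2u}$ is nonzero and its logarithm has base $-1\ne0,1$. Every division has a nonzero denominator ($y+1$, $y'$, and the internal $\Log N$ and $\Log(-1)$). Every subterm passed through the compiled multiplication and addition is real, so each of the three representations is defined on all of $\N$ or $\N^2$ and agrees with the target operation there.
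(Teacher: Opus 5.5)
Your proof is correct and follows the paper's route. Addition is \eqref{eq:compile-add} with base $2$, $2^x$ is a primitive power, and the remainder uses a $J$-based zero indicator that replaces the divisor $y$ by $y+\delta(y)$ before applying $J(x/\cdot)$. The only differences are cosmetic: the paper uses $J\bigl(3/(2(y+1))\bigr)$, which keeps the $y=0$ case off the branch-cut endpoint (your $J(1)$ is still exact by Lemma~\ref{lem:floor}), and it uses the plain outer factor $y$, which is exactly what your $(1-Z(y))\,y'$ equals on $\N$.
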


\begin{proof}
The third operation is already a power.  Addition is represented by
\begin{equation}\label{eq:addition}
 A(x,y):=\log_2\!\left(
     \log_2\!\left((2^{2^x})^{2^y}\right)\right)=x+y.
\end{equation}
All bases and logarithm arguments here are positive.

For total remainder, define
\begin{equation}\label{eq:zero-indicator}
  \delta(y):=J\!\left(\frac{3}{2(y+1)}\right),
  \qquad q(y):=y+\delta(y).
\end{equation}
On $\N$, $\delta(0)=1$ and $\delta(y)=0$ for $y\ge1$.  Hence $q(0)=1$,
$q(y)=y$ for $y\ge1$, and the denominator in
\begin{equation}\label{eq:remainder}
 R(x,y):=x-y\,
 J\!\left(\frac{x}{q(y)}\right)
\end{equation}
never vanishes.  Equation~\eqref{eq:remainder} gives $R(x,0)=x$; for $y\ge1$,
Lemma~\ref{lem:floor} gives
$R(x,y)=x-y\lfloor x/y\rfloor=x\bmod y$.  Equation~\eqref{eq:J-pure} and
Lemma~\ref{lem:compile} remove the displayed field operations.

Concretely, at $y=0$ the indicator changes the would-be denominator from $0$
to $1$, while the outside factor $y$ makes the correction vanish.  At $y=3$
the indicator is $J(3/8)=0$, so the same expression is just ordinary division
with remainder; for example, $R(11,3)=2$.
\end{proof}

\begin{corollary}\label{cor:kalmar}
Every Kalm\'ar elementary function is the restriction to $\N^r$ of a
log--exp term with principal values and constants.
\end{corollary}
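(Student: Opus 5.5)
The plan is to combine the basis theorem \eqref{eq:psa-basis} with Proposition~\ref{prop:basis-terms} by induction on the generation of $\E$ by substitution. By \eqref{eq:psa-basis}, every $f\in\E$ is obtained from constants, projections, $x+y$, $x\bmod y$ and $2^x$ by finitely many substitutions. I will prove, by induction on the length of such a derivation, that each function in the derivation agrees on $\N^r$ with some log--exp term with principal values and constants, say $t_f(x_1,\ldots,x_r)$.

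The base cases are direct. A projection $x_i$ is the variable itself. A natural-number constant $k$ is a permitted constant; alternatively, it can be built from $\mathbf{1}_2$ and the compiled addition of Lemma~\ref{lem:compile}. The three basis operations are handled by Proposition~\ref{prop:basis-terms}: the power $2^x$, the term $A$ of \eqref{eq:addition}, and the compiled form of $R$ of \eqref{eq:remainder}. For the inductive step, suppose $f=g(h_1,\ldots,h_k)$ with terms $t_g,t_{h_1},\ldots,t_{h_k}$ already in hand. I will take $t_f$ to be the syntactic substitution $t_g(t_{h_1},\ldots,t_{h_k})$, which is again a finite term over $\{x^y,\log_ba\}$.

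The one point that needs care, and the main obstacle, is that the terms are partial functions: they are only guaranteed correct on natural-number inputs. So the induction hypothesis must be stated strongly. For every $\vec x\in\N^r$, the term $t_f$ is defined at $\vec x$, and \emph{every} subterm evaluation is within the domain, with final value equal to $f(\vec x)$. In particular that final value is a natural number. Since each $h_j(\vec x)\in\N$, the outer term $t_g$ is then evaluated only at natural-number arguments. There, by hypothesis, it is defined and correct, so no excluded logarithm or base is ever requested.

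For this to work, the base case for remainder must be checked in this stronger sense. I must confirm that the compiled $R$ is defined at all $(x,y)\in\N^2$, including $y=0$. The denominator $q(y)\ge1$ ensures this. The factors $y+1$ and $q(y)$ are nonzero. The transport base in Lemma~\ref{lem:compile} is the fixed constant $N=2>1$. The only complex subterm is the unit-circle power inside \eqref{eq:J-pure}, which is nonzero and is passed straight to $\log_{-1}$. Multiplication by $0$ is also safe: in \eqref{eq:compile-mul}, $(N^x)^0=1$ and $\log_N1=0$ are both defined. With these checks done, the induction closes, and every $f\in\E$ is the restriction to $\N^r$ of such a term.
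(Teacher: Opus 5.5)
Your proposal is correct and follows the paper's own proof. That proof simply substitutes the three representatives of Proposition~\ref{prop:basis-terms} into a term supplied by the basis theorem \eqref{eq:psa-basis}; your induction on the substitution derivation, with the strengthened hypothesis that every subterm is defined at each $\vec x\in\N^r$ and the value is the natural number $f(\vec x)$, makes explicit the domain bookkeeping ($q(y)\ge1$, the fixed transport base $N=2$, and the real value of $\log_{-1}$) that the paper leaves to Proposition~\ref{prop:basis-terms} and Lemma~\ref{lem:compile}.
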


\begin{proof}
Substitute the three representatives from Proposition~\ref{prop:basis-terms}
into a term supplied by \eqref{eq:psa-basis}.
\end{proof}

This is an inclusion statement, not a characterization of all
integer-valued restrictions of principal-value log--exp terms.
Constants are part of the cited substitution convention, so one may take
$N=2$ in Lemma~\ref{lem:compile}; the input-dependent choice $N=s$ is needed
only for the constant-free tag-step terms.

\section{A cyclic-tag formulation}\label{sec:cyclic}

In comments on the original Mathematics Stack Exchange post, r.e.s.
suggested a three-function computational basis---an implementation of BCT---using
the same binary encoding. I was able to express that basis within my framework.
A cyclic tag system is a list $(P_0,\ldots,P_{k-1})$ of binary words, with
$k\ge1$.
A step deletes the front bit, appends $P_j$ if that bit was $1$, and advances
$j\mapsto j+1\pmod k$; it halts when the dataword is empty \cite{cook}.

Pack a configuration $(w,j)$, with $0\le j<k$, as $S=kc_2(w)+j$.  Set
\begin{align*}
 j(S)&:=S-kJ\!\left(\frac{S}{k}\right),
 &s(S)&:=\frac{S-j(S)}{k},\\
 d(S)&:=s(S)-2J\!\left(\frac{s(S)}2\right),
\end{align*}
and let $L$ be the rational Lagrange interpolation polynomial satisfying
$L(j)=c_2(P_j)$ for $j=0,\ldots,k-1$.  For a nonempty word, put
\begin{equation}\label{eq:cyclic}
\begin{split}
 \Phi(S):={}&k\left[
   J\!\left(\frac{s(S)}2\right)
   +2^{J(\log_2s(S))-1}d(S)(L(j(S))-1)
   \right]\\
 &+j(S)+1-kJ\!\left(\frac{j(S)+1}{k}\right).
\end{split}
\end{equation}
Lemma~\ref{lem:floor} gives $j(S)=S\bmod k$ and makes the last line
$(j+1)\bmod k$.  The bracketed expression repeats the calculation in the
proof of Theorem~\ref{thm:binary-step}, now with one-symbol deletion and with
the production contribution multiplied by the deleted bit.  Thus
\eqref{eq:cyclic} is the exact cyclic-tag update.  The empty production has
encoding $1$ and contributes zero; a deleted $0$ also contributes zero.
Halting is the threshold $S<2k$.  Lemma~\ref{lem:compile}, with $N:=S$, makes
\eqref{eq:cyclic} a constant-free two-operation term on its pre-halting
domain.

\section{A direct Collatz step}\label{sec:collatz}

The principal-value convention also gives a short expression for the
\emph{shortcut Collatz map}
\begin{equation}\label{eq:collatz-step}
 T(s):=\begin{cases}
   s/2,&s\text{ even},\\
   (3s+1)/2,&s\text{ odd}.
 \end{cases}
\end{equation}
Here an odd step includes the division by $2$ that immediately follows
$3s+1$.  The input $s$ is the positive integer itself.

\begin{proposition}\label{prop:collatz}
For every integer $s>1$, the constant-free log--exp term
\begin{equation}\label{eq:collatz-pure}
 C(s):=
 \log_{\log_s((s^s)^s)}\!\left[
   \log_s\!\left(
     \left(s^{s^s}\right)^{
       \left[
         \log_s\!\left(\left((s^s)^{s^s}\right)^{s^s}\right)
       \right]^{
         \log_{\log_s(\log_{s^s}s)}\!\left(
           \left[\log_s(\log_{s^s}s)\right]^s
         \right)
       }
     }
   \right)
 \right]
\end{equation}
is defined and equals $T(s)$.  It contains $23$ binary operations:
$14$ powers and $9$ logarithms, with $s$ as its only leaf symbol.
\end{proposition}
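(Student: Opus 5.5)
The plan is to evaluate \eqref{eq:collatz-pure} from the inside out, naming each subterm and checking that it is defined and equal to an elementary expression in $s$. The guiding idea is that the whole term computes
\[
 \log_{s^2}\!\bigl(s^{\,s+(2s+1)E(s)}\bigr)=\frac{s+(2s+1)E(s)}{2},
 \qquad E(s):=s\bmod 2 .
\]
This equals $s/2$ when $E=0$ and $(3s+1)/2$ when $E=1$, which is exactly $T(s)$. Every subterm except the parity gadget has a positive real base $s>1$, or a positive power of $s$, so by the remarks in the proof of Lemma~\ref{lem:compile} principal logarithms there introduce no branch correction. Those identities are therefore ordinary real calculations.

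First, the constant-building pieces:
\begin{itemize}
\item $\log_s((s^s)^s)=\log_s(s^{s^2})=s^2$. This is the base of the outermost logarithm; it is defined because $s^2\neq0,1$.
\item $\log_{s^s}s=1/s$, so $\log_s(\log_{s^s}s)=\log_s(s^{-1})=-1$. This is the $\boldsymbol{-1}_N$ of Lemma~\ref{lem:compile} with $N=s$.
\end{itemize}
The exponent in the middle is therefore $E(s)=\log_{-1}\bigl((-1)^s\bigr)$, and this is the one place where the principal-value convention matters. I would argue as in Lemma~\ref{lem:floor}. We have $(-1)^s=e^{s\Log(-1)}=e^{i\pi s}$. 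For even integer $s$ this equals $1$, whose $\Log$ is $0$. For odd $s$ it equals $-1$, whose $\Log$ is $i\pi$, the endpoint included in $\Arg\in(-\pi,\pi]$. Dividing by $\Log(-1)=i\pi$ gives $E(s)=s\bmod 2\in\{0,1\}$. This value is real, and in particular it is a legitimate exponent below.

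Next, the large tower. Since $(s^s)^{s^s}=s^{s\cdot s^s}=s^{s^{s+1}}$, raising it to the power $s^s$ gives $s^{s^{2s+1}}$, and $\log_s$ of that is $s^{2s+1}$. Raising this to the real power $E$ gives the positive real $s^{(2s+1)E}$. The next layer is
\[
 \bigl(s^{s^s}\bigr)^{s^{(2s+1)E}}=s^{\,s^s\cdot s^{(2s+1)E}}=s^{\,s^{\,s+(2s+1)E}},
\]
and $\log_s$ of it equals $s^{\,s+(2s+1)E}$. Finally, $\log_{s^2}$ of this is $\bigl(s+(2s+1)E\bigr)/2$, as claimed. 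Splitting on the parity of $s$ then gives $T(s)$.

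The expected obstacle is only the parity step. One has to confirm that $(-1)^s$ lands exactly on the branch cut for odd $s$ and receives the value $i\pi$ rather than $-i\pi$, which would give $E=-1$ and the wrong answer. One also has to check that no other subterm touches a nonpositive base. The count of $23$ operations ($14$ powers, $9$ logarithms, with $s$ as the only leaf) is a direct tally of the displayed term.
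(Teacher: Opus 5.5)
Your proof is correct and follows the paper's proof essentially step for step: the same abbreviations $D=s^2$, $M=-1$, $V=s^{2s+1}$, the same parity selector $\log_M(M^s)$ (your $E$), the same collapse to $\log_{s^2}\bigl(s^{s+(2s+1)E}\bigr)$, and the same observation that only $M$ and $M^s$ leave the positive reals. Two small points you leave implicit are harmless: the other logarithm bases $s$ and $s^s$ are also neither $0$ nor $1$ for $s>1$, and the direct tally does come out to $9$ logarithms and $14$ powers.
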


\begin{proof}
To read the expression, temporarily abbreviate
\begin{align*}
 D&:=\log_s((s^s)^s)=s^2,\\
 M&:=\log_s(\log_{s^s}s)=-1,\\
 V&:=\log_s\!\left(\left((s^s)^{s^s}\right)^{s^s}\right)
     =s^{2s+1},\\
 p&:=\log_M(M^s)=
   \begin{cases}0,&s\text{ even},\\1,&s\text{ odd}.
   \end{cases}
\end{align*}
The identity for $M$ is the constant construction in
Lemma~\ref{lem:compile}.  The parity selector $p$ uses
$\Log(-1)=i\pi$: $M^s$ is $1$ or $-1$ according to the parity of $s$.
Substitution in \eqref{eq:collatz-pure} now gives
\[
 C(s)=\log_D\!\left(\log_s\!\left((s^{s^s})^{V^p}\right)\right)
     =\log_{s^2}\!\left(s^{s+(2s+1)p}\right)
     =\frac{s+(2s+1)p}{2}=T(s).
\]
All logarithm bases are among $s$, $s^s$, $D=s^2$, and $M=-1$;
none is $0$ or $1$ when $s>1$.  All logarithm arguments and power bases
are positive, except for the allowed values $M=-1$ and $M^s=-1$.
Thus every subterm is defined.  The positive-real powers used in the
simplification introduce no branch correction.
\end{proof}

Iteration is stopped when the value reaches $1$, since evaluating
\eqref{eq:collatz-pure} at $s=1$ would require a logarithm with base $1$.
The Collatz conjecture is equivalent to termination, for every positive
integer initial value, of the iteration of $C$ stopped upon reaching $1$.
This gives an exact expression for each step, without establishing that
every positive-integer orbit reaches $1$.

\paragraph{Acknowledgments and use of AI tools.}
I thank r.e.s. for the suggestions
that led me to explore the cyclic formulation, and Mihai Prunescu and Lorenzo
Sauras-Altuzarra for helpful correspondence and for pointing me toward the
Kalm\'ar basis question.  During preparation of this version I used
Anthropic's Claude, OpenAI's Codex, and ChatGPT to help check edge cases,
simplify the floor construction, and revise proofs and exposition.
All errors are my own.

\end{document}